\tikzset{
every node/.style={circle, draw=black, inner sep=3pt},
every picture/.style=thick,
bluenode/.style={circle, draw=black, fill=blue!40, very thick, minimum size=3mm,inner sep=1mm},
whitenode/.style={circle, draw=black, fill=black!10, very thick, minimum size=3mm,inner sep=1mm},
squarednode/.style={rectangle, draw=red!60, fill=red!5, very thick, minimum size=3mm},
every loop/.style={min distance=8mm} 
}
\newtheorem{theorem}{Theorem}[section]
\newtheorem{lemma}[theorem]{Lemma}
\newtheorem{proposition}[theorem]{Proposition}
\newtheorem{corollary}[theorem]{Corollary}
\theoremstyle{definition}
\newtheorem{definition}[theorem]{Definition}
\newtheorem{observation}[theorem]{Observation}
\newtheorem{remark}[theorem]{Remark}
\newtheorem{example}[theorem]{Example}
\newtheorem{question}[theorem]{Question}
\newenvironment{thm}{\begin{theorem}}{\end{theorem}}
\newenvironment{lem}{\begin{lemma}}{\end{lemma}}
\newenvironment{prop}{\begin{proposition}}{\end{proposition}}
\newenvironment{cor}{\begin{corollary}}{\end{corollary}}
\newenvironment{defn}{\begin{definition}\bgroup\rm }{\egroup\end{definition}}
\newenvironment{rem}{\begin{remark}\bgroup\rm }{\egroup\end{remark}}
\newenvironment{ex}{\begin{example}\bgroup\rm }{\egroup\end{example}}
\newenvironment{enumarabic}{\begin{enumerate}[label={\rm (\arabic*)}]}{\end{enumerate}}
\def \mr {\operatorname{mr}}
\def \rank{\operatorname{rank}}
\def \nul{\operatorname{null}}
\def \cc{\operatorname{cc}}
\def \S{\mathcal{S}}
\def \dunion{\mathbin{\dot\cup}}
\newcommand{\OS}{\operatorname{OS}}
\newcommand{\Zelld}{Z_{\dot{\ell}}}
\newcommand{\Zell}{Z_{\ell}}
\newcommand{\Zminus}{Z_-}
\newcommand{\ZL}{Z_L}
\newcommand{\Zarrow}{\xrightarrow{Z}}
\newcommand{\Zelldarrow}{\xrightarrow{\Zelld}}
\newcommand{\Zminusarrow}{\xrightarrow{\Zminus}}
\newcommand{\ZLarrow}{\xrightarrow{\ZL}}
\newcommand{\Selld}{\S_{\dot{\ell}}}
\newcommand{\Szero}{\S_0}
\newcommand{\Melld}{M_{\dot{\ell}}}
\newcommand{\Mzero}{M_0}
\newcommand{\ML}{M_L}
\newcommand{\mrelld}{\mr_{\dot{\ell}}}
\newcommand{\mrzero}{\mr_0}
\newcommand{\mrL}{\mr_L}
\newcommand{\gd}{\gamma_{\rm gr}}
\newcommand{\gdZ}{\gd^Z}
\newcommand{\gdt}{\gd^t}
\newcommand{\gdL}{\gd^L}
\newcommand{\BL}{\mathcal{B}_L}
\newcommand{\gag}{\gamma_{\rm g}}
\begin{document}

\title{Zero forcing number, Grundy domination number, and their variants}

\author{
        Jephian C.-H. Lin\footnotemark[2]
        }

\date{\today}

\maketitle

\renewcommand{\thefootnote}{\fnsymbol{footnote}}
\footnotetext[2]{
Department of Mathematics, Iowa State University, 
Ames, IA 50011, USA (chlin@iastate.edu).}

\renewcommand{\thefootnote}{\arabic{footnote}}

\begin{abstract}
  This paper presents strong connections between four variants of the zero forcing number and four variants of the Grundy domination number.  These connections bridge the domination problem and the minimum rank problem.  We show that the Grundy domination type parameters are bounded above by the minimum rank type parameters.  We also give a method to calculate the $L$-Grundy domination number by the Grundy total domination number, giving some linear algebra bounds for the $L$-Grundy domination number.
\end{abstract}

\noindent{\bf Keywords:} 
Zero forcing, Grundy domination, minimum rank, maximum nullity
\medskip 

\noindent{\bf AMS subject classifications:}
05C50, 
05C57, 
05C69, 
05C70, 
05C85. 

\section{Introduction}
The zero forcing number $Z(G)$ considers a propagation process on a simple graph and refers to the minimum number of blue vertices so that all vertices can turn blue eventually under some color-change rule.  The zero forcing number was introduced independently by mathematicians for studying the minimum rank problem \cite{AIM} and physicists for quantum control \cite{BG07}.  The zero forcing number has been studied extensively, and many variants have been introduced; see, e.g., \cite{HLA46,param} and the references therein.  Variants of zero forcing have applications to the fast-mixed search in computer science \cite{FMY16} and the cops-and-robber game in graph theory \cite{param}.  Zero forcing can also be used for designing logic circuits \cite{BGHSY15}.  

On the other hand, a \emph{dominating set} $X$ of a simple graph $G$ is a set of vertices such that every vertex outside of $X$ has a neighbor in $X$, and the \emph{domination number} $\gamma(G)$ is the minimum cardinality of a dominating set.  One way to find a minimum dominating set is by the greedy algorithm:  Start with an empty set $X$.  Find a vertex $v$ such that
\[N[v]\setminus\bigcup_{x\in X}N[x]\neq \emptyset\]
and add $v$ to $X$, where $N[x]$ is the set of closed neighbors of $x$.  Keep doing this step until no more vertex can be added to $X$.  When the algorithm ends, it means $\bigcup_{x\in X}N[x]=V(G)$ and $X$ is a dominating set.  At any stage of the greedy algorithm, the elements in $X$ can be written as a sequence $(v_1,\ldots,v_k)$ such that
\[N[v_i]\setminus\bigcup_{j=1}^{i-1}N[v_j]\neq\emptyset\]
for all $i=1,\ldots, k$.  This condition guarantees that the newly added vertex is not redundant.  Depending on the searching process of adding a new vertex, the greedy algorithm can either return a minimum dominating set or a larger set with the abovementioned structure.  The Grundy domination number $\gd(G)$ refers to the maximum length of a sequence given by the greedy algorithm.

The Grundy domination number $\gd(G)$ and the Grundy total domination number $\gdt(G)$ was introduced in \cite{DinGraph} and \cite{TDinGraph}, respectively.  Bre\v{s}ar et al.~\cite{BBGKKPTV17} introduced the other two variants of the Grundy domination number, namely, the $Z$-Grundy domination number $\gdZ(G)$ and the $L$-Grundy domination number; the authors also showed that $Z(G)+\gdZ(G)=|V(G)|$, so finding the value of $\gdZ(G)$ is equivalent to finding the value of $Z(G)$.

In this paper, we will show that not only the $Z$-Grundy domination number but every Grundy domination type parameters mentioned in \cite{BBGKKPTV17} relates to a zero forcing type parameter.  That is, we will show in Theorem~\ref{mainthm} that
\begin{multicols}{2}
\begin{enumarabic}
\item $Z(G)=n-\gdZ(G)$,
\item $\Zelld(G)=n-\gd(G)$,
\item $\Zminus(G)=n-\gdt(G)$, 
\item $\ZL(G)=n-\gdL(G)$,
\end{enumarabic}
\end{multicols}
\noindent for any graph $G$.  The definitions of these parameters will be introduced in Section~\ref{prelim}.

In Section~\ref{mainsec}, we will prove the relations between the zero forcing type parameters and the Grundy domination type parameters, and many inequalities are given.  Section~\ref{mrboundssec} includes the definitions of the minimum rank type parameters, and we will show that each Grundy domination type parameters is bounded above by a minimum rank type parameter.  Finally, in Section~\ref{lgboundsec} we will provide a way to calculate the $L$-Grundy domination number by the Grundy total domination number and give some linear algebra bounds for $\gdL(G)$.  Figure~\ref{allparam} illustrates all related parameters, where a line connecting two parameters means the lower parameter is bounded above by the upper parameter.

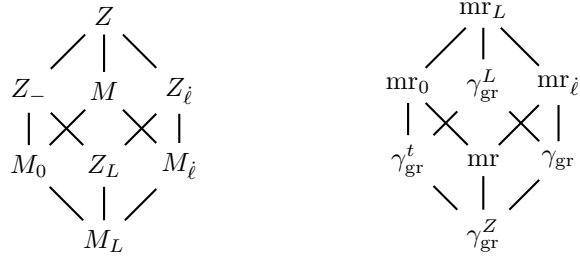
\begin{figure}[h]
\begin{center}
\begin{tikzpicture}[every node/.style={draw=none}]
\node (Z) at (1,3) {$Z$};
\node (Zminus) at (0,2) {$\Zminus$};
\node (Zelld) at (2,2) {$\Zelld$};
\node (ZL) at (1,1) {$\ZL$};
\node (M) at (1,2) {$M$};
\node (Mzero) at (0,1) {$\Mzero$};
\node (Melld) at (2,1) {$\Melld$};
\node (what) at (1,0) {$\ML$};
\draw (what) -- (Mzero) -- (Zminus) -- (Z);
\draw (what) -- (Melld) -- (Zelld) -- (Z);
\draw (Zminus) -- (ZL) -- (Zelld);
\draw (Mzero) -- (M) -- (Melld);
\draw (what) -- (ZL);
\draw (M) -- (Z);
\end{tikzpicture}
\hfil
\begin{tikzpicture}[every node/.style={draw=none}]
\node (what) at (1,3) {$\mrL$};
\node (mrzero) at (0,2) {$\mrzero$};
\node (mrelld) at (2,2) {$\mrelld$};
\node (mr) at (1,1) {$\mr$};
\node (gdL) at (1,2) {$\gdL$};
\node (gdt) at (0,1) {$\gdt$};
\node (gd) at (2,1) {$\gd$};
\node (gdZ) at (1,0) {$\gdZ$};
\draw (gdZ) -- (gdt) -- (mrzero) -- (what);
\draw (gdZ) -- (gd) -- (mrelld) -- (what);
\draw (mrzero) -- (mr) -- (mrelld);
\draw (gdt) -- (gdL) -- (gd);
\draw (gdZ) -- (mr);
\draw (gdL) -- (what);
\end{tikzpicture}
\end{center}
\caption{An illustration of related parameters}
\label{allparam}
\end{figure}

\subsection{Preliminaries}
\label{prelim}
All graph considered are simple and finite.  For a vertex $x$ of $G$, the \emph{open neighborhood} of $x$ in $G$ is denoted as $N_G(x)$, while the \emph{closed neighborhood} is denoted as $N_G[x]$.  When the context is clear, we will simply use $N(x)$ and $N[x]$.  

Let $G$ be a graph.  The \emph{zero forcing game} is a color-change game such that each vertex is colored blue or white initially, and then the \emph{color change rule} (CCR) is applied repeatedly.  The color change rule varies from different variants of the zero forcing game, as we will see in Definition~\ref{Zdefn}.  If starting with an initial blue set $B\subseteq V(G)$ makes every vertex turn blue eventually, then $B$ is called a \emph{zero forcing set}.  The \emph{zero forcing number} is defined as the minimum cardinality of a zero forcing set.

Different types of zero forcing numbers have been discussed in the literature (e.g., see \cite{param,cancun}).  As we will see in Section~\ref{mrboundssec}, many of them serve as upper bounds for variants of the maximum nullity.  Here we recall three types of the zero forcing numbers $Z(G)$, $\Zelld(G)$, $\Zminus(G)$ from \cite{AIM,cancun,IMAISU} and introduce a new parameter, the \emph{$L$-zero forcing number} $\ZL(G)$.  

\begin{defn}
\label{Zdefn}
On a graph where vertices are colored blue or white, the color change rule for each of $Z$, $\Zelld$, $\Zminus$, and $\ZL$ are as follows.
\begin{enumarabic}
\item (CCR-$Z$) If $y\in N(x)$ and $N[x]$ are all blue except for
  $y$, then $y$ turns blue.  Denoted as $x\Zarrow y$.
\item (CCR-$\Zelld$) If $y\in N[x]$ and $N[x]$ are all blue except for
  $y$, then $y$ turns blue.  Denoted as $x\Zelldarrow y$.
\item (CCR-$\Zminus$) If $y\in N(x)$ and $N(x)$ are all blue except for
  $y$, then $y$ turns blue.  Denoted as $x\Zminusarrow y$.
\item (CCR-$\ZL$) Either $x\Zminusarrow y$ when $x\neq y$, or $x\Zelldarrow y$ when $x=y$ may apply.  Denoted as $x\ZLarrow y$.
\end{enumarabic}
\end{defn}

One may think of CCR-$\ZL$ as the following:  If $y\in N[x]$ and $N(x)$ are all blue ``except'' for $y$, then $y$ turns blue.  However, this definition is not clear in the case of $x=y$, so we explicitly separate it into two cases: $x\Zminusarrow y$ when $x\neq y$, or $x\Zelldarrow y$ when $x=y$. 

\begin{rem}
In \cite{cancun}, the zero forcing number of a loop graph is defined.  The parameter $\Zelld(G)$ is the same as the zero forcing number of a loop graph that is obtained from $G$ by considering every vertex as having a loop.  Notice that $\Zelld(G)$ is slightly different from the loop zero forcing number $\Zell(G)$ defined in \cite{param}.  When $G$ has exactly $r$ isolated vertices, $\Zell(G)=\Zelld(G)+r$.

Similarly, the parameter $\Zminus(G)$ is the same as the zero forcing number of a loop graph that is obtained from $G$ by considering every vertex as having no loop.  It is also called the skew zero forcing number in \cite{IMAISU} for studying the minimum rank problem on skew-symmetric matrices. 
\end{rem}

When the color change rule in effect is clear, we sometimes omit the superscript above the arrow and write $a\rightarrow b$ as a force.  In a zero forcing game with a given color change rule, the \emph{chronological list} records the performed forces $a_i\rightarrow b_i$ in the chronological order.  A game is called \emph{successful} if all vertices turn blue at the end.  If $(a_i\rightarrow b_i)_{i=1}^k$ is the chronological list of a successful zero forcing game, then $b_1,\ldots ,b_k$ are the initial white vertices and $V(G)\setminus\{b_1,\ldots, b_k\}$ is the set of initial blue vertices, which is a zero forcing set.

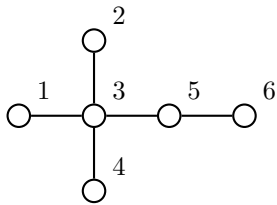
\begin{figure}[h]
\begin{center}
\begin{tikzpicture}
\node (1) [label={45:1}] at (0,1) {};
\node (2) [label={45:2}] at (1,2) {};
\node (3) [label={45:3}] at (1,1) {};
\node (4) [label={45:4}] at (1,0) {};
\node (5) [label={45:5}] at (2,1) {};
\node (6) [label={45:6}] at (3,1) {};
\draw (1) -- (3) -- (5) -- (6);
\draw (2) -- (3) -- (4);
\end{tikzpicture}
\end{center}
\caption{An example where $Z$, $\Zminus$, $\Zelld$, and $\ZL$ are all distinct}
\label{allZdiff}
\end{figure}

\begin{ex}
\label{exZ}
Let $G$ be the graph as in Figure \ref{allZdiff}.  Then $Z(G)=3$, $\Zelld(G)=1$, $\Zminus(G)=2$, and $\ZL(G)=0$.  For each zero forcing type parameters, an minimum zero forcing set $B$ along with its chronological list is shown below.
\vbox{
\begin{multicols}{4}
\noindent
\[\begin{array}{c}
Z(G)=3 \\
B=\{1,2,4\} \\
\\
1\Zarrow 3 \\
3\Zarrow 5 \\
5\Zarrow 6 \\
\end{array}\]
\vfill\null\columnbreak
\noindent
\[\begin{array}{c}
\Zelld(G)=1 \\
B=\{3\} \\
\\
1\Zelldarrow 1 \\
2\Zelldarrow 2 \\
4\Zelldarrow 4 \\
3\Zelldarrow 5 \\
5\Zelldarrow6 \\
\end{array}\]
\vfill\null\columnbreak
\noindent
\[\begin{array}{c}
\Zminus(G)=2 \\
B=\{1,2\} \\
\\
4\Zminusarrow 3 \\
5\Zminusarrow 6 \\
6\Zminusarrow 5 \\
3\Zminusarrow 4 \\
\end{array}\]
\vfill\null\columnbreak
\noindent
\[\begin{array}{c}
\ZL(G)=0 \\
B=\{\} \\
\\
1\ZLarrow 3 \\
1\ZLarrow 1 \\
2\ZLarrow 2 \\
4\ZLarrow 4 \\
3\ZLarrow 5 \\
5\ZLarrow 6 \\
\end{array}\]
\end{multicols}}
\end{ex}

\begin{defn}
Let $G$ be a graph. A sequence $(v_1, v_2,\ldots, v_k)$ of distinct vertices of $G$ is called
\begin{enumarabic} 
\item a \emph{$Z$-sequence} if $N(v_i)\setminus\bigcup_{j=1}^{i-1}N[v_j]\neq\emptyset$ for all $i=1,\ldots,k$;
\item a \emph{dominating sequence} if $N[v_i]\setminus\bigcup_{j=1}^{i-1}N[v_j]\neq\emptyset$ for all $i=1,\ldots,k$;
\item a \emph{total dominating sequence} if $N(v_i)\setminus\bigcup_{j=1}^{i-1}N(v_j)\neq\emptyset$ for all $i=1,\ldots,k$;
\item an \emph{$L$-sequence} if $N[v_i]\setminus\bigcup_{j=1}^{i-1}N(v_j)\neq\emptyset$ for all $i=1,\ldots,k$.
\end{enumarabic}
The \emph{$Z$-Grundy domination number} $\gdZ(G)$ is the largest length of a $Z$-sequence;  the \emph{Grundy domination number} $\gd(G)$ is the largest length of a dominating sequence;  the \emph{Grundy total domination number} $\gdt(G)$ is the largest length of a total dominating sequence;  the \emph{$L$-Grundy domination number} $\gdL(G)$ is the largest length of a $L$-sequence.  
\end{defn}

\begin{ex}
Let $G$ be the graph in Figure \ref{allZdiff}.  Then $(6,5,3)$ is a $Z$-sequence.  Indeed, for each chronological list shown in Example \ref{exZ}, reading the second column from the bottom to the top gives the corresponding sequence.  That is, $(6,5,4,2,1)$ is a dominating sequence, $(4,5,6,3)$ is a total dominating sequence, and $(6,5,4,2,1,3)$ is an $L$-sequence.  
\end{ex}

\begin{rem}
In \cite{BBGKKPTV17}, the $Z$-Grundy domination number and the Grundy total domination number are defined on graphs without isolated vertices.  However, the same definitions are well-defined for all graph.  Indeed, if a graph $G$ can be written as $H\dunion rK_1$ such that $H$ has no isolated vertices, then $\gdZ(G)=\gdZ(H)$ and $\gdt(G)=\gdt(H)$.
\end{rem}

\section{The four zero forcing type parameters and the four Grundy domination type parameters}
\label{mainsec}

In this section we give the relations between the four zero forcing type parameters $Z$, $\Zelld$, $\Zminus$, $\ZL$ and the four Grundy domination type parameters $\gdZ$, $\gd$, $\gdt$, $\gdL$.  Note that the identity $Z(G)+\gdZ(G)=|V(G)|$ has been shown in \cite{BBGKKPTV17}.

\begin{lem}
\label{mainlemma}
Let $G$ be a graph and $(b_1,b_2,\ldots ,b_k)$ a sequence of vertices of $G$.  There exist vertices $a_1,a_2,\ldots,a_k$ such that $(a_i\rightarrow b_i)_{i=1}^k$ is the chronological list of a successful zero forcing game with 
\begin{multicols}{2}
\begin{enumarabic}
\item CCR-$Z$, 
\item CCR-$\Zelld$,
\item CCR-$\Zminus$, or 
\item CCR-$\ZL$
\end{enumarabic}
\end{multicols}
 if and only if $(b_k,b_{k-1},\ldots, b_1)$ is 
\begin{multicols}{2}
\begin{enumarabic}
\item a $Z$-sequence,
\item a dominating sequence,
\item a total dominating sequence, or 
\item a $L$-sequence,
\end{enumarabic}
\end{multicols}
respectively.
\end{lem}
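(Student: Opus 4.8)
The plan is to exploit a single structural correspondence and then instantiate it in each of the four cases. Given the sequence $(b_1,\ldots,b_k)$, I reindex its reversal by setting $w_j=b_{k+1-j}$, so that for $i=k+1-j$ the prefix $\{w_1,\ldots,w_{j-1}\}$ is exactly the suffix $\{b_{i+1},\ldots,b_k\}$. The crucial observation is that in any successful game with chronological list $(a_\ell\to b_\ell)_{\ell=1}^k$, each force turns a distinct white vertex blue, so the $b_\ell$ are distinct, the initial white set is $\{b_1,\ldots,b_k\}$, and at the moment the $i$-th force is applied the set of still-white vertices is precisely $\{b_i,b_{i+1},\ldots,b_k\}$. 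Thus the validity of the force $a_i\to b_i$ is entirely a statement about which of $b_i,\ldots,b_k$ lie in the relevant neighborhood of $a_i$.

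First I would translate each color change rule into a membership condition, using the symmetry $u\in N(v)\iff v\in N(u)$ and the analogous symmetry for closed neighborhoods. For CCR-$Z$ the force $a_i\to b_i$ is valid exactly when $b_i\in N(a_i)$ and no $b_m$ with $m>i$ lies in $N[a_i]$; by symmetry this says $a_i\in N(b_i)\setminus\bigcup_{m>i}N[b_m]$, which is precisely the $Z$-sequence condition for $w_j=b_i$ witnessed by $a_i$. The same bookkeeping gives, for CCR-$\Zelld$, the condition $a_i\in N[b_i]\setminus\bigcup_{m>i}N[b_m]$ (the dominating-sequence condition), and for CCR-$\Zminus$, the condition $a_i\in N(b_i)\setminus\bigcup_{m>i}N(b_m)$ (the total-dominating-sequence condition). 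So the four rules differ only in whether the minuend neighborhood and the subtracted neighborhoods are open or closed.

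With this dictionary the two directions become routine. For the forward direction, the forcing vertices $a_i$ produced by a successful game are, by the translation above, witnesses showing that $(w_1,\ldots,w_k)=(b_k,\ldots,b_1)$ satisfies the corresponding sequence condition at every position. For the converse, given such a sequence I would, for each $i$, select a witness $a_i$ guaranteed by the sequence condition and then perform the forces in the order $1,2,\ldots,k$; an induction shows the white set at step $i$ is $\{b_i,\ldots,b_k\}$, and the membership computation above shows $b_i$ is the unique still-white vertex in the relevant neighborhood of $a_i$, so the force is legal. For the closed-neighborhood rules I would additionally note that the chosen witness is blue when required: if $a_i\in N(b_i)$ then $a_i\neq b_i$, and $a_i\notin N[b_m]$ for $m>i$ forces $a_i\notin\{b_i,\ldots,b_k\}$. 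Since every $b_i$ becomes blue and the initial blue set is $V(G)\setminus\{b_1,\ldots,b_k\}$, the game is successful.

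The one case needing genuine care is CCR-$\ZL$, and I expect it to be the main obstacle. Here the intended condition for $w_j=b_i$ is the $L$-sequence condition $a_i\in N[b_i]\setminus\bigcup_{m>i}N(b_m)$, mixing a closed minuend with open subtracted neighborhoods, and a witness $a_i$ may or may not equal $b_i$. I would split on this: when $a_i=b_i$ the condition says $N(b_i)$ meets none of $b_{i+1},\ldots,b_k$, which is exactly the legality of the loop force $b_i\Zelldarrow b_i$; when $a_i\neq b_i$ the condition collapses (since $a_i\in N[b_i]$ and $a_i\neq b_i$ give $a_i\in N(b_i)$) to the legality of the force $a_i\Zminusarrow b_i$. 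Checking that these two sub-cases reassemble into precisely the rule CCR-$\ZL$, and conversely that every $\ZL$ force falls into one of them, is the delicate step; everything else is the same reversal-and-symmetry argument applied four times.
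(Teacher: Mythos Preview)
Your proposal is correct and follows essentially the same approach as the paper's proof: both hinge on the observation that at step $i$ the white vertices are exactly $\{b_i,\ldots,b_k\}$, both translate each color change rule into the membership condition $a_i\in N^{?}(b_i)\setminus\bigcup_{m>i}N^{?}(b_m)$ (with the appropriate open/closed choices), and both handle CCR-$\ZL$ by splitting on whether $a_i=b_i$. The paper writes out only Case~(4) explicitly and declares the others similar, whereas you set up the uniform dictionary for all four cases first; but the substance of the argument is the same.
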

\begin{proof}
Case (1) is in \cite{BBGKKPTV17}.  Here we prove Case (4), as the others are similar.

Suppose there are vertices $a_1,a_2,\ldots ,a_k$ such that $(a_i\ZLarrow b_i)_{i=1}^k$ is the chronological list of a successful zero forcing game with CCR-$\ZL$.  We will claim that 
\[a_i\in N[b_i]\setminus\bigcup_{j=i+1}^k N(b_j)\]
for all $i$.  Consequently, $(b_k,b_{k-1},\ldots, b_1)$ is an $L$-sequence.  For each $i=1,\ldots, k$, it must be one of the two cases following.

{\bf Case A:} $a_i=b_i$ and $a_i\ZLarrow b_i$ is done through $a_i\Zelldarrow b_i$.  This means by the time $a_i\ZLarrow b_i$ all vertices in $N(a_i)$ are blue and $b_i$ is white.  At this moment, the white vertices are $b_i, b_{i+1},\ldots,b_k$, so $b_j\notin N(a_i)$ for all $j=i+1,\ldots ,k$, and $a_i\notin \bigcup_{j=i+1}^k N(b_j)$.  Also, $a_i=b_i\in N[b_i]$.

{\bf Case B:} $b_i\in N(a_i)$ and $a_i \ZLarrow b_i$ is done through $a_i \Zminusarrow b_i$.  This means by the time $a_i\ZLarrow b_i$ all vertices in $N(a_i)$ are blue except for $b_i$.  At this moment, the white vertices are $b_i, b_{i+1},\ldots,b_k$, so $b_j\notin N(a_i)$ for all $j=i+1,\ldots ,k$, and $a_i\notin \bigcup_{j=i+1}^k N(b_j)$.

Conversely, suppose $(b_k,b_{k-1},\ldots,b_1)$ is an $L$-sequence.  Pick $a_i$ as an element in $N[b_i]\setminus\bigcup_{j=i+1}^k N(b_j)$ for each $i$.  We will show that $(a_i\ZLarrow b_i)_{i=1}^k$ is the chronological list of a successful zero forcing game with CCR-$\ZL$, starting with the initial blue set 
\[V(G)\setminus\{b_1,b_2,\ldots,b_k\}.\]
To see this, assume at the $i$-th step the vertices $b_1,\ldots, b_{i-1}$ are blue, or equivalently, the white vertices are $b_i,\ldots, b_k$.

{\bf Case A:} $a_i=b_i$ and $a_i\notin \bigcup_{j=i+1}^k N(b_j)$.  This means $b_j\notin N(a_i)$ for all $j=i+1,\ldots,k$, and $N(a_i)$ are all blue.  Therefore, $a_i\Zelldarrow b_i$ applies.

{\bf Case B:} $a_i\in N(b_i)$ and $a_i\notin \bigcup_{j=i+1}^k N(b_j)$.  This means $b_j\notin N(a_i)$ for all $j=i+1,\ldots,k$, and $N(a_i)$ are all blue except for $b_i$.  Thus, $b_i$ is the only white vertex in $N(a_i)$, and $a_i\Zminusarrow b_i$ applies.
\end{proof}

\begin{thm}
\label{mainthm}
Let $G$ be a graph and $|V(G)|=n$.  Then 
\begin{multicols}{2}
\begin{enumarabic}
\item $Z(G)=n-\gdZ(G)$,
\item $\Zelld(G)=n-\gd(G)$,
\item $\Zminus(G)=n-\gdt(G)$,
\item $\ZL(G)=n-\gdL(G)$.
\end{enumarabic}
\end{multicols}
\end{thm}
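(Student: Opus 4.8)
The plan is to deduce all four identities at once from Lemma~\ref{mainlemma}, which establishes the crucial bijective correspondence between chronological lists of successful zero forcing games and the reversed sequences of the appropriate combinatorial type. Since the four cases are entirely parallel, I would carry out the argument for a generic pair (a zero forcing variant $\mathcal{Z}\in\{Z,\Zelld,\Zminus,\ZL\}$ together with its matching Grundy parameter $\gamma\in\{\gdZ,\gd,\gdt,\gdL\}$) and remark that the same reasoning applies verbatim to each line.

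First I would recall the structural observation already noted in the excerpt: if $(a_i\rightarrow b_i)_{i=1}^k$ is the chronological list of a successful zero forcing game, then the forced vertices $b_1,\ldots,b_k$ are exactly the initial white vertices, so the initial blue set is $V(G)\setminus\{b_1,\ldots,b_k\}$, which has cardinality $n-k$. Thus a successful game of length $k$ corresponds to a zero forcing set of size $n-k$, and minimizing the size of the zero forcing set is the same as maximizing the length $k$ of a successful game. Writing $\mathcal{Z}(G)$ for the zero forcing number under the rule in question, this gives
\[
\mathcal{Z}(G)=n-\max\{k : \text{there is a successful game of length }k\}.
\]

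Next I would invoke Lemma~\ref{mainlemma} to translate the maximization on the right-hand side into the Grundy domination setting. The lemma says that a sequence $(b_1,\ldots,b_k)$ arises as the forced-vertex list of some successful game (for a given rule) if and only if the reversal $(b_k,\ldots,b_1)$ is a sequence of the corresponding type ($Z$-sequence, dominating, total dominating, or $L$-sequence). Since reversal is a length-preserving bijection on finite sequences of distinct vertices, the maximum length of a successful game equals the maximum length of such a sequence, which is precisely the relevant Grundy domination number $\gamma(G)$. Substituting yields $\mathcal{Z}(G)=n-\gamma(G)$, and instantiating $(\mathcal{Z},\gamma)$ as the four matched pairs produces identities (1)--(4). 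I expect the main obstacle to be purely expository rather than mathematical: one must be careful that the existence statement in Lemma~\ref{mainlemma} is genuinely an ``if and only if,'' so that both the inequality $\mathcal{Z}(G)\le n-\gamma(G)$ (from a maximum-length Grundy sequence, reversed, giving a valid short zero forcing set) and the reverse inequality (from an optimal zero forcing game reading off a long Grundy sequence) are covered simultaneously; the reversal bijection handles both directions at once.
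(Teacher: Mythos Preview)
Your proposal is correct and follows exactly the same approach as the paper, which simply states that the result follows immediately from Lemma~\ref{mainlemma}. You have spelled out in detail the passage from the lemma to the theorem---that a successful game of length $k$ yields a zero forcing set of size $n-k$, and that the if-and-only-if in the lemma makes the maximum game length equal to the corresponding Grundy number---but this is precisely the one-line deduction the paper has in mind.
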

\begin{proof}
This follows immediately from Lemma~\ref{mainlemma}.
\end{proof}

\begin{prop}
Let $G$ be a graph on $n$ vertices.  Then 
\begin{multicols}{2}
\begin{enumarabic}
\item $\ZL(G)\leq \Zelld(G)\leq Z(G)$, 
\item $\ZL(G)\leq \Zminus(G)\leq Z(G)$,
\item $2Z(G)\leq n+\Zminus(G)$, and 
\item $2\Zelld(G)\leq n+\ZL(G)$.
\end{enumarabic}
\end{multicols}
Moreover, $\ZL(G)+1\leq \Zelld(G)$ if $G$ has at least an edge.
\end{prop}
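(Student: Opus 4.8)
The plan is to treat the four inequalities in two groups and then the ``moreover'' clause separately. Parts (1) and (2) are monotonicity statements, and I would derive them all at once from the observation that the four color change rules are ordered by \emph{permissiveness}. Concretely, I would check at the level of a single force, directly against Definition~\ref{Zdefn}, that every $x\Zarrow y$ is simultaneously a legal $\Zelld$- and $\Zminus$-force, and that every $\Zelld$- or $\Zminus$-force is a legal $\ZL$-force; the only new feature of the more permissive rule is either dropping the requirement that $x$ be blue (passing from $Z$ to $\Zminus$) or allowing the loop case $x=y$ (passing from $Z$ to $\Zelld$). Given such an inclusion of rules, I would invoke the standard ``replay'' argument: if rule $R'$ accepts every force that $R$ accepts, then replaying an $R$-game verbatim is a legal $R'$-game, so every $R$-forcing set is an $R'$-forcing set and $Z_{R'}\le Z_{R}$. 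This yields $\ZL\le\Zelld\le Z$ and $\ZL\le\Zminus\le Z$.

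For parts (3) and (4) I would first translate them through Theorem~\ref{mainthm}: substituting $\Zminus=n-\gdt$, $Z=n-\gdZ$ turns (3) into $\gdt\le 2\gdZ$, and $\ZL=n-\gdL$, $\Zelld=n-\gd$ turns (4) into $\gdL\le 2\gd$, so it suffices to bound a Grundy-type sequence by twice a shorter one. I would prove both by the same \emph{footprint-splitting} scheme. Fix a longest total dominating (resp.\ $L$-) sequence $(v_1,\dots,v_m)$ and, for each $i$, choose a footprint $w_i$ witnessing the defining condition (in the $L$-case I would take $w_i=v_i$ whenever admissible). Call index $i$ \emph{new} if $w_i\notin\bigcup_{j<i}N[v_j]$ and \emph{old} otherwise, and let $A,B$ be the sets of new and old indices. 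The new indices yield a genuine $Z$-sequence (resp.\ dominating sequence): on $(v_i)_{i\in A}$ each $w_i$ still witnesses the stronger condition with closed neighborhoods, so $|A|\le\gdZ$ (resp.\ $|A|\le\gd$). For an old index $i$ one has $w_i=v_{j(i)}$ with $j(i)<i$, and I would show $i\mapsto j(i)$ is an injection $B\hookrightarrow A$; then $m=|A|+|B|\le 2|A|$, exactly the desired bound.

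The crux, and the step I expect to be the main obstacle, is proving that $i\mapsto j(i)$ lands in $A$ and is injective. Injectivity follows because distinct old indices with the same $j$ would force two equal footprints, which the definitions forbid: footprints of a total dominating sequence are pairwise distinct, and an $L$-sequence footprint can repeat only in the degenerate way $w_i=v_i$, which cannot be old. That $j(i)\in A$ is where the hypotheses do real work: since $w_i=v_{j(i)}$ avoids every earlier open neighborhood, $v_{j(i)}$ is non-adjacent to all $v_\ell$ with $\ell<i$, so any admissible $w_{j(i)}\in N(v_{j(i)})$ must be either a non-sequence vertex or a \emph{later} sequence vertex, hence new; in the $L$-case the prefer-$v_i$ choice makes $w_{j(i)}=v_{j(i)}$ new outright. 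Carrying out this adjacency bookkeeping carefully and uniformly across the open- and closed-neighborhood variants is the delicate part.

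For the final claim I would first delete isolated vertices: each isolated vertex is forced to appear with footprint itself in every dominating and $L$-sequence, so it contributes $1$ to both $\gd$ and $\gdL$ and does not affect $\gdL-\gd$; as $G$ has an edge, the reduced graph still has one and no isolated vertex. Now take a longest dominating sequence $(v_1,\dots,v_p)$ with $p=\gd$; being longest it is maximal, so $\bigcup_j N[v_j]=V(G)$. Let $x_p$ be the footprint of the last term. If $x_p\neq v_p$, then $x_p\in N(v_p)$ is a non-sequence vertex avoiding all earlier open neighborhoods, and I would insert it to form $(v_1,\dots,v_{p-1},x_p,v_p)$: the inserted term $x_p$ is witnessed by itself, and $v_p$ is then witnessed by $x_p$ (legal precisely because $x_p$ lies in no earlier open neighborhood), giving an $L$-sequence of length $p+1$. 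If instead $x_p=v_p$, then $v_p$ is non-adjacent to all other $v_j$, hence has a neighbor $y$ outside the sequence, and appending $y$ with footprint $v_p$ again produces an $L$-sequence of length $p+1$. Either way $\gdL\ge\gd+1$, i.e.\ $\ZL+1\le\Zelld$.
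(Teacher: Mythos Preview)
Your argument is correct. Parts (1) and (2) match the paper exactly: both deduce the chain of inequalities from the evident ordering of the four color change rules by permissiveness, which the paper phrases simply as ``follow from the definition.''

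Where you diverge is in (3), (4), and the ``moreover'' clause. The paper does not prove the underlying Grundy inequalities $\gdt\le 2\gdZ$, $\gdL\le 2\gd$, and $\gd\le\gdL-1$ at all; it quotes them as known results from \cite{BBGKKPTV17} and then translates via Theorem~\ref{mainthm}, handling isolated vertices by writing $G=H\dunion rK_1$. You instead supply self-contained combinatorial proofs. Your footprint-splitting scheme (partitioning indices of a longest sequence into ``new'' and ``old'' according to whether the witness lies outside $\bigcup_{j<i}N[v_j]$, then injecting old indices into new ones via $i\mapsto j(i)$) is a clean direct argument; the key verifications---that $j(i)$ is new because $v_{j(i)}$ is non-adjacent to every earlier $v_\ell$, and that the ``prefer $w_i=v_i$'' convention forces old $L$-indices to have $w_i\neq v_i$---are carried out correctly. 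Likewise, your two-case construction for $\gdL\ge\gd+1$ (inserting $x_p$ before $v_p$ when $x_p\neq v_p$, appending a neighbor $y$ of $v_p$ when $x_p=v_p$) is valid once isolated vertices are stripped, and your reduction showing isolates contribute equally to $\gd$ and $\gdL$ is sound. What the paper's approach buys is brevity and a clear pointer to the literature; what yours buys is independence from \cite{BBGKKPTV17} and an explicit mechanism that one could reuse for related inequalities.
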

\begin{proof}
For (1) and (2), the inequalities follow from the definition.

Suppose $G$ has exactly $r$ isolated vertices and is written as $H\dunion rK_1$, where $H$ does not have any vertex if $G$ has no edge.

If $G$ has at least an edge, then $H$ is a (non-degenerated) graph without isolated vertices.  By \cite{BBGKKPTV17}, $\gd(H)\leq \gdL(H)-1$, so 
\[\begin{aligned}
\ZL(G)+1&=n-\gdL(G)+1=n-\gdL(H)-r+1 \\
 &\leq n-\gd(H)-r=n-\gd(G)=\Zelld(G)
\end{aligned}\]
by Theorem~\ref{mainthm}.

For (3), it is known \cite{BBGKKPTV17} that $\gdt(H)\leq 2\gdZ(H)$ for any graph without isolated vertices, so 
\[\begin{aligned} 
2Z(G) &=2n-2\gdZ(G)=2n-2\gdZ(H) \\
 &\leq 2n-\gdt(H)=2n-\gdt(G)=n+\Zminus(G)
\end{aligned}\]
by Theorem~\ref{mainthm}.

For (4), it is known \cite{BBGKKPTV17} that $\gdL(G)\leq 2\gd(G)$ for any graph, so 
\[2\Zelld(G) =2n-2\gd(G)\leq 2n-\gdL(G)=n+\ZL(G)\]
by Theorem~\ref{mainthm}.
\end{proof}

Recall that $\gamma(G)$ is the domination number.  Since $\gd(G)$ considers the worst case in the greedy algorithm of finding $\gamma(G)$, we know $\gamma(G)\leq \gd(G)$.  Indeed, for a graph $G$ without isolated vertices, any maximal $Z$-sequence also gives a dominating set, so $\gamma(G)\leq\gdZ(G)$.  This gives an upper bound to $Z(G)$.

\begin{prop}
Let $G$ be a graph without isolated vertex and $n=|V(G)|$.  Then $Z(G)\leq n-\gamma(G)$.
\end{prop}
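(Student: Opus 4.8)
The plan is to derive the bound directly from the identity $Z(G)=n-\gdZ(G)$ in Theorem~\ref{mainthm}(1), so that the only new thing to establish is the domination inequality $\gamma(G)\le\gdZ(G)$. Once we have this inequality, subtracting both sides from $n$ immediately yields $Z(G)=n-\gdZ(G)\le n-\gamma(G)$, which is the claim.

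To prove $\gamma(G)\le\gdZ(G)$, I would take a $Z$-sequence $(v_1,\ldots,v_k)$ of maximum length $k=\gdZ(G)$ and argue that the underlying vertex set $X=\{v_1,\ldots,v_k\}$ is a dominating set of $G$. Since $\gamma(G)$ is by definition the minimum size of a dominating set, exhibiting any dominating set of size $\gdZ(G)$ gives $\gamma(G)\le k=\gdZ(G)$ at once, with no further work.

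The heart of the argument is thus showing that a maximum (hence maximal) $Z$-sequence dominates $G$. I would argue by contradiction: suppose $X$ fails to dominate some vertex $u$, i.e.\ $u\notin\bigcup_{j=1}^k N[v_j]$. Because $G$ has no isolated vertex, $u$ has a neighbor $w$. Two checks then finish the proof. First, $w\notin X$: if $w=v_j$ for some $j$, then $u\in N(w)=N(v_j)\subseteq N[v_j]$, contradicting that $u$ is undominated; so $w$ is a genuinely new vertex. Second, $w$ extends the sequence, since $u\in N(w)$ together with $u\notin\bigcup_{j=1}^k N[v_j]$ shows $N(w)\setminus\bigcup_{j=1}^k N[v_j]\neq\emptyset$, exactly the condition required to append $w$. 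Hence $(v_1,\ldots,v_k,w)$ is a $Z$-sequence of length $k+1$, contradicting the maximality of $k$. Therefore $X$ dominates $G$.

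I expect the main (and essentially only) delicate point to be this domination step, specifically the two verifications that the neighbor $w$ is new and that appending it respects the $Z$-sequence condition $N(v_i)\setminus\bigcup_{j<i}N[v_j]\neq\emptyset$. It is worth noting that the no-isolated-vertex hypothesis is precisely what guarantees the neighbor $w$ exists; without it, an undominated isolated vertex would have no neighbor to append, the extension argument would break down, and indeed the inequality itself can fail.
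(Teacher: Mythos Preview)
Your proof is correct and follows exactly the approach the paper sketches in the paragraph immediately preceding the proposition: show that any maximal $Z$-sequence yields a dominating set (hence $\gamma(G)\le\gdZ(G)$), then apply Theorem~\ref{mainthm}(1). The paper asserts this without spelling out the contradiction argument, but your expanded verification---that an undominated vertex $u$ has a neighbor $w\notin X$ which can be appended---is precisely the intended justification.
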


\section{The minimum rank type parameters}
\label{mrboundssec}

Let $G$ be a graph on $n$ vertices.  Define $\S(G)$ as the family of $n\times n$ real symmetric matrices whose $i,j$-entry, $i\neq j$, is nonzero whenever $\{i,j\}\in E(G)$ and zero otherwise.  Notice that there are no restrictions on the diagonal entries.  The \emph{maximum nullity} and the \emph{minimum rank} of $G$ are defined as
\[
\begin{aligned}
M(G) &= \max\{\nul(A): A\in\S(G)\} \text{ and}\\
\mr(G) &= \min\{\rank(A): A\in\S(G)\},
\end{aligned}
\]
respectively.  By definition, $M(G)+\mr(G)=|V(G)|$.  It is shown in \cite{AIM} that $M(G)\leq Z(G)$ for every graph.

Among matrices in $\S(G)$, let $\Selld(G)$ be the matrices with every diagonal entry nonzero, and let $\Szero(G)$ be the matrices with every diagonal entry zero.  Similarly, define
\[
\begin{aligned}
\Melld(G) &= \max\{\nul(A): A\in\Selld(G)\},\\
\mrelld(G) &= \min\{\rank(A): A\in\Selld(G)\},\\
\Mzero(G) &= \max\{\nul(A): A\in\Szero(G)\},\\
\mrzero(G) &= \min\{\rank(A): A\in\Szero(G)\}.
\end{aligned}
\]
By definition, $\Melld(G)+\mrelld(G)=|V(G)|$ and $\Mzero(G)+\mrzero(G)=|V(G)|$.  Also, it is known \cite{cancun} that $\Melld(G)\leq \Zelld(G)$ and $\Mzero(G)\leq \Zminus(G)$.  

Usually, finding a lower bound for a Grundy domination type parameter is by constructing a sequence and verify if the sequence has the desired property.  On the other side, finding an upper bound for a Grundy domination type parameter requires an argument showing every sequence with the corresponding properties cannot be too long.  Theorem~\ref{mrbounds} gives a fairly easy way to find upper bounds for the Grundy domination type parameters.  (The upper bound for the $L$-Grundy domination number will be provided in Section~\ref{lgboundsec}.)

\begin{thm}
\label{mrbounds}
Let $G$ be a graph.  Then 
\begin{enumarabic}
\item $\gdZ(G)\leq\mr(G)$,
\item $\gd(G)\leq\mrelld(G)$, and 
\item $\gdt(G)\leq\mrzero(G)$.
\end{enumarabic}
\end{thm}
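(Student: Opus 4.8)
The plan is to relate the length of a Grundy-type sequence directly to the rank of a suitable matrix. I will prove all three statements by a single uniform construction: given a sequence of the appropriate kind, I build a matrix $A$ in the corresponding matrix family and exhibit a set of rows and columns whose submatrix is triangular with nonzero diagonal, forcing the rank to be at least the length of the sequence. The key observation is that the defining condition of each sequence --- for instance, $N(v_i)\setminus\bigcup_{j=1}^{i-1}N[v_j]\neq\emptyset$ for a $Z$-sequence --- produces, for each $i$, a ``witness'' vertex $w_i$ adjacent (or equal, in the closed-neighborhood cases) to $v_i$ but avoiding the earlier neighborhoods. These witnesses index the rows, and the $v_i$ index the columns, of a $k\times k$ submatrix that turns out to be lower (or upper) triangular.

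First I would fix a maximum-length sequence $(v_1,\ldots,v_k)$ of the given type, so that $k=\gdZ(G)$, $\gd(G)$, or $\gdt(G)$ respectively. For each $i$ I choose a witness $w_i$ in the nonempty difference set guaranteed by the sequence condition. The heart of the argument is that the matrix entry pattern respects the ordering: because $w_i\notin\bigcup_{j<i}N[v_j]$ (or $N(v_j)$ for the total domination case), the entry of $A$ in row $w_i$, column $v_j$ must vanish for every $j<i$, while the entry in row $w_i$, column $v_i$ is forced to be nonzero by the adjacency (or by the nonzero diagonal requirement when $w_i=v_i$). This is precisely where the three cases use their respective matrix families: for $\gdZ$ the matrix lies in $\S(G)$ and I need the off-diagonal witness entry nonzero; for $\gd$ the witnesses may coincide with $v_i$ and I rely on $A\in\Selld(G)$ having nonzero diagonal; for $\gdt$ the total-domination condition uses open neighborhoods and I work with $A\in\Szero(G)$, whose zero diagonal forces all witnesses to be genuine neighbors. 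In each case the $k\times k$ submatrix indexed by $(w_i)$ and $(v_i)$ is triangular with nonzero diagonal, hence nonsingular, so $\rank(A)\geq k$.

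Since the bound must hold for every matrix in the family, not just a cleverly chosen one, I would phrase this carefully: the argument shows that \emph{any} $A$ in the relevant family $\S(G)$, $\Selld(G)$, or $\Szero(G)$ has this nonsingular $k\times k$ submatrix, using only the sign/support pattern dictated by $G$ together with the diagonal restrictions. Therefore $\mr(G)\geq k=\gdZ(G)$, and similarly $\mrelld(G)\geq\gd(G)$ and $\mrzero(G)\geq\gdt(G)$, which are exactly the three claimed inequalities.

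I expect the main obstacle to be verifying that the witnesses $(w_i)$ are \emph{distinct}, so that the chosen rows and columns genuinely form a $k\times k$ submatrix rather than collapsing. The triangularity argument guarantees that $w_i\neq w_j$ cannot fail in a way that destroys nonsingularity --- indeed if the $w_i$ were not distinct the submatrix would have a repeated row --- so the real content is checking that the witness-selection can always be made injective, or alternatively arguing that a triangular pattern with nonzero diagonal is nonsingular even before worrying about distinctness by showing the rows $w_1,\ldots,w_k$ are linearly independent directly from the staircase support structure. The subtlety of handling the closed-versus-open neighborhood bookkeeping across the three cases, and ensuring the diagonal-entry hypotheses are invoked correctly, is the part that most requires care.
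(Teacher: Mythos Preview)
Your argument is correct, and it is genuinely different from the route the paper takes. The paper simply invokes the already-known inequalities $M(G)\leq Z(G)$, $\Melld(G)\leq\Zelld(G)$, $\Mzero(G)\leq\Zminus(G)$ from the minimum rank literature and then applies Theorem~\ref{mainthm} (the complementary identities $Z(G)+\gdZ(G)=n$, etc.) to translate them into the Grundy-domination form. Your proof instead bypasses zero forcing entirely and works directly with the sequence: the witnesses $w_i$ index rows, the $v_i$ index columns, and the resulting $k\times k$ submatrix is triangular with nonzero diagonal for \emph{every} matrix in the relevant family. What the paper's approach buys is brevity and a clear signal that the theorem is a repackaging of known facts; what your approach buys is a self-contained argument that makes the rank--sequence connection visible without any appeal to the zero forcing bounds, and it would survive even if one had never defined $Z$, $\Zelld$, $\Zminus$.

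On the obstacle you flagged: the witnesses are automatically distinct, so you can drop the hedging. If $i>j$ then $w_j$ lies in $N[v_j]$ (cases (1),(2)) or $N(v_j)$ (case (3)), whereas $w_i$ is chosen outside that set; hence $w_i\neq w_j$. The linear-independence-from-staircase workaround you sketched is therefore unnecessary. The only case-specific care point, which you handled correctly, is case~(3): there $w_i\notin N(v_j)$ for $j<i$ does not rule out $w_i=v_j$, but if that happens the entry $A_{w_i,v_j}$ is diagonal and hence zero precisely because $A\in\Szero(G)$.
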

\begin{proof}
By \cite{AIM, cancun}, $M(G)\leq Z(G)$, $\Melld(G)\leq\Zelld(G)$, and $\Mzero(G)\leq\Zminus(G)$.  This is equivalent to $\gdZ(G)\leq\mr(G)$, $\gd(G)\leq\mrelld(G)$, and $\gdt(G)\leq\mrzero(G)$ by Theorem~\ref{mainthm}. 
\end{proof}

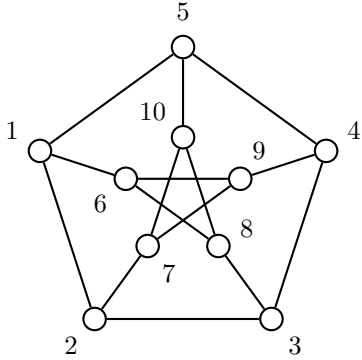
\begin{figure}[h]
\begin{center}
\begin{tikzpicture}
\foreach \i in {1,...,5}{
\pgfmathsetmacro{\j}{int(\i+5)}
\pgfmathsetmacro{\ang}{90+(72*\i)}
\pgfmathsetmacro{\lang}{\ang+60}
\node[label={\ang:$\i$}] (\i) at (\ang:2) {};
\node[label={\lang:$\j$}] (\j) at (\ang:0.8) {};
\draw (\i) -- (\j);
}
\draw (1) -- (2) -- (3) -- (4) -- (5) -- (1);
\draw (6) -- (8) -- (10) -- (7) -- (9) -- (6);
\end{tikzpicture}
\end{center}
\caption{A labeled Petersen graph}
\label{Petersen}
\end{figure}

\begin{ex}
Let $P$ be the Petersen graph as drawn in Figure~\ref{Petersen}.  Then $(1,2,3,4,5)$ is a $Z$-sequence and also a dominating sequence.  And $(9,1,2,3,4,5)$ is a total dominating sequence.

Let
\[C=\begin{bmatrix}
0&1&0&0&1\\
1&0&1&0&0\\
0&1&0&1&0\\
0&0&1&0&1\\
1&0&0&1&0\\
\end{bmatrix}\text{ and }
C'=\begin{bmatrix}
0&0&1&1&0\\
0&0&0&1&1\\
1&0&0&0&1\\
1&1&0&0&0\\
0&1&1&0&0\\
\end{bmatrix}.\]
Let 
\[A=\begin{bmatrix}
C-I_5 & I_5 \\
I_5 & C'-I_5 \\
\end{bmatrix}\text{ and }
B=\begin{bmatrix}
-C & I_5 \\
I_5 & C' \\
\end{bmatrix},\]
where $I_5$ is the identity matrix of order $5$. 
One may check that $A\in \Selld(G)\subseteq \S(G)$ and $\rank(A)=5$; also, $B\in\Szero(G)$ and $\rank(B)=6$.  Therefore, we know $\gdZ(G)=\gd(G)=5$ and $\gdt(G)=6$.
\end{ex}

Recall that the \emph{independence number} $\alpha(G)$ is the largest cardinality of a independent set, and the \emph{vertex cover number} $\beta(G)$ is the minimum number of vertices such that every edge is incident to at least one of these vertices.  It is a standard result that $\alpha(G)+\beta(G)=|V(G)|$.  In \cite{TDinGraph}, it was shown that $\gdt(G)\leq 2\beta(G)$.  Here we improve this result by showing $\mrzero(G)\leq 2\beta(G)$. 

\begin{prop}
Let $G$ be a graph.  Then $\Zminus(G)\geq \Mzero(G)\geq\alpha(G)-\beta(G)$ and $\gdt(G)\leq \mrzero(G)\leq 2\beta(G)$.
\end{prop}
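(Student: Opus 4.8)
The plan is to recognize that most of this chain is already in hand, so that only one genuinely new inequality needs an argument. The leftmost bound $\Zminus(G)\ge\Mzero(G)$ is the known inequality from \cite{cancun} recorded at the start of this section, and $\gdt(G)\le\mrzero(G)$ is precisely part~(3) of Theorem~\ref{mrbounds}. Thus the new content reduces to the two bounds $\Mzero(G)\ge\alpha(G)-\beta(G)$ and $\mrzero(G)\le 2\beta(G)$. My first step is to observe that these two are equivalent: writing $n=|V(G)|$ and combining $\Mzero(G)+\mrzero(G)=n$ with $\alpha(G)+\beta(G)=n$, the inequality $\Mzero(G)\ge\alpha(G)-\beta(G)$ becomes $n-\mrzero(G)\ge n-2\beta(G)$, that is, $\mrzero(G)\le 2\beta(G)$. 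So it suffices to prove the single bound $\mrzero(G)\le 2\beta(G)$.

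For that bound, I would exhibit the structural reason behind it rather than optimize over $\Szero(G)$. Fix a minimum vertex cover $S$, so that $|S|=\beta(G)$ and $\bar S:=V(G)\setminus S$ is an independent set, and order the vertices with those of $S$ first. For any $A\in\Szero(G)$, every diagonal entry is zero and $\bar S$ spans no edges, so the principal submatrix of $A$ on $\bar S$ is the zero block; equivalently, each row of $A$ indexed by a vertex of $\bar S$ is supported entirely on the $\beta$ columns indexed by $S$. Splitting the rows of $A$ into the $S$-block and the $\bar S$-block, the first block has only $\beta$ rows and hence rank at most $\beta$, while the second block consists of vectors living in a $\beta$-dimensional coordinate subspace and so also has rank at most $\beta$. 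Therefore $\rank(A)\le 2\beta(G)$. Since $\Szero(G)$ is nonempty (it contains the adjacency matrix of $G$) and this estimate holds for \emph{every} $A\in\Szero(G)$, we conclude $\mrzero(G)\le 2\beta(G)$, and then $\Mzero(G)\ge\alpha(G)-\beta(G)$ follows from the equivalence above.

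This argument is short, and I do not expect a serious obstacle; the only points requiring care are the two that make the rank estimate work. First, one must check that the zero diagonal (and not merely the absence of edges) is what forces the $\bar S$ principal block to vanish entirely, which is exactly where the family $\Szero(G)$, rather than $\S(G)$, is used. Second, one invokes the elementary fact that the rank of a row-partitioned matrix is at most the sum of the ranks of the parts, together with the observation that the $\bar S$-rows are confined to the $\beta$ coordinates of $S$. Both are routine, so the real work is simply the recognition in the first paragraph that the statement collapses to the single inequality $\mrzero(G)\le 2\beta(G)$.
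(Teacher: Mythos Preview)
Your proposal is correct and essentially the same as the paper's proof. Both reduce to showing $\mrzero(G)\le 2\beta(G)$ by exploiting that the principal submatrix of any $A\in\Szero(G)$ on a maximum independent set is identically zero; the paper phrases the rank bound as ``adding $\beta(G)$ rows and $\beta(G)$ columns to a rank-$0$ block increases rank by at most $2\beta(G)$'', while you phrase it via a row partition, but this is only a cosmetic difference.
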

\begin{proof}
Let $A$ be a matrix in $\Szero(G)$ and $X$ an independent set of $G$ with $|X|=\alpha(G)$.  Since there is no edges in $X$, the submatrix $A[X]$ of $A$ induced on rows and columns in $X$ is a zero matrix, and $\rank(A[X])=0$.  Also, the matrix $A$ can be obtained from $A[X]$ by adding $\beta(G)$ rows and $\beta(G)$ columns, since $|V(G)|-\alpha(G)=\beta(G)$.  Adding a row or a column can increase the rank by at most one, so 
\[\rank(A)\leq \rank(A[X])+2\beta(G)=2\beta(G)\]
and $\mrzero(G)\leq \rank(A)\leq 2\beta(G)$.  Other inequalities follows from Theorem~\ref{mainthm} and Theorem~\ref{mrbounds}.
\end{proof}

The \emph{edge clique cover number} $\cc(G)$ is the minimum number of cliques that can cover every edge of $G$.  It is noted in \cite{BBGKKPTV17} that $\gd(G)\leq \cc(G)$.  Here we prove that $\mrelld(G)\leq \cc(G)$; the technique is standard in the field of the minimum rank problem.

\begin{prop}
Let $G$ be a graph.  Then $\Zelld(G)\geq \Melld(G)\geq n-\cc(G)$ and 
$\gd(G)\leq \mrelld(G)\leq \cc(G)$.
\end{prop}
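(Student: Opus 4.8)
The bulk of this chain has already been established earlier in the excerpt: the outer inequality $\Zelld(G)\geq\Melld(G)$ is the result of \cite{cancun} quoted at the start of Section~\ref{mrboundssec}, and $\gd(G)\leq\mrelld(G)$ is exactly part~(2) of Theorem~\ref{mrbounds}. Moreover, since $\Melld(G)+\mrelld(G)=|V(G)|$, the inequality $\Melld(G)\geq n-\cc(G)$ is equivalent to $\mrelld(G)\leq\cc(G)$. Thus the plan is to reduce everything to the single new claim $\mrelld(G)\leq\cc(G)$, which I would prove by exhibiting a matrix $A\in\Selld(G)$ with $\rank(A)\leq\cc(G)$.

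To build such a matrix I would use the standard clique-cover construction. Let $\{K_1,\ldots,K_t\}$ be a minimum edge clique cover of $G$, so that $t=\cc(G)$, and for each $\ell$ let $u_\ell\in\mathbb{R}^n$ be the indicator vector of the vertex set of $K_\ell$. Define
\[A=\sum_{\ell=1}^t u_\ell u_\ell^{\top}.\]
Being a sum of $t$ rank-one matrices, $A$ satisfies $\rank(A)\leq t=\cc(G)$ immediately, so the entire weight of the argument falls on checking that $A$ lies in $\Selld(G)$.

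For $i\neq j$, the entry $A_{ij}$ counts the number of cliques $K_\ell$ containing both $i$ and $j$. Since every summand is nonnegative there is no cancellation over $\mathbb{R}$, so $A_{ij}\neq 0$ precisely when some clique contains both endpoints, which by the clique-cover property happens exactly when $\{i,j\}\in E(G)$. Hence the off-diagonal pattern of $A$ agrees with $G$. Similarly, the diagonal entry $A_{ii}$ counts the cliques containing $i$, which is positive as long as every vertex lies in some clique of the cover; this holds whenever $G$ has no isolated vertex, and one may adjoin singleton cliques (or invoke the corresponding convention for $\cc$) to cover any isolated vertices. Therefore $A\in\Selld(G)$ and $\mrelld(G)\leq\rank(A)\leq\cc(G)$.

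Finally I would assemble the two chains: $\Melld(G)=n-\mrelld(G)\geq n-\cc(G)$ combines with $\Zelld(G)\geq\Melld(G)$ to give the first chain, while $\mrelld(G)\leq\cc(G)$ combines with Theorem~\ref{mrbounds}(2) to give the second. The only genuinely delicate point is the off-diagonal verification, namely guaranteeing that every non-edge receives a zero entry while every edge receives a nonzero one, together with the bookkeeping for the diagonal at isolated vertices; the nonnegativity of the indicator outer products is what makes the ``nonzero on edges'' half automatic and avoids any need for generic coefficients.
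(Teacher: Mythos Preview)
Your proof is correct and follows the paper's approach exactly: the paper also sets $A=\sum_t A_t$ where each $A_t$ is the rank-one indicator matrix of a clique in a minimum edge clique cover, observes $A\in\Selld(G)$ with $\rank(A)\leq\cc(G)$, and then invokes Theorem~\ref{mainthm} and Theorem~\ref{mrbounds} for the remaining inequalities. Your verification of the off-diagonal pattern and your remark on the diagonal at isolated vertices are in fact more careful than the paper, which simply asserts $A\in\Selld(G)$ without comment.
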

\begin{proof}
Let $k=\cc(G)$ and $\{C_1,\ldots,C_k\}$ an edge clique cover.  For each clique $C_t$ we may construct a rank-one matrix $A_t$ where the $i,j$-entry is $1$ whenever both $i$ and $j$ ($i=j$ is possible) are in $C_t$ and zero otherwise.  Thus, the matrix $A=\sum_{t=1}^k A_t$ is a matrix in $\Selld(G)$ with $\rank(A)\leq k$.  Therefore, $\mrelld(G)\leq k=\cc(G)$.  Other inequalities follows from Theorem~\ref{mainthm} and Theorem~\ref{mrbounds}.
\end{proof}


\section{Linear algebra bounds for the $L$-Grundy domination number}
\label{lgboundsec}

In this section, we will provide a relation between the $L$-Grundy domination number and the Grundy total domination number.  Then we use this relation to provide some linear algebra bounds for the $L$-Grundy domination number.

Let $G$ be a graph on the vertex set $\{1,\ldots,n\}$.  Construct a bipartite graph $\BL(G)$ with 
\begin{itemize}
\item $V(\BL(G))=\{x_1,\ldots,x_n\}\cup\{y_1,\ldots,y_n\}\cup\{z_1,\ldots,z_n\}$,
\item $E(\BL(G))=\{\{x_i,y_j\},\{x_i,z_j\}:\{i,j\}\in E(G)\}\cup\{\{x_i,y_i\}:i\in V(G)\}$.
\end{itemize}

Let $G$ be a graph and $X\subseteq V(G)$.  We say $\gdt(G,X)$ is the maximum length of a total dominating sequence of $G$ using only vertices in $X$.  

\begin{figure}[h]
\begin{center}
\begin{tikzpicture}
\foreach \i in {1,...,4}{
\node[label={45:\i}] (\i) at (0,-\i) {};
}
\draw (1) -- (2) -- (3) -- (4);
\node[rectangle,draw=none] at (0,-4.5) {$G$};
\end{tikzpicture}
\hfil
\begin{tikzpicture}
\foreach \i in {1,...,4}{
\node[label={[label distance=-4pt]above:$x_{\i}$}] (a\i) at (0,-\i) {};
\node[label={135:$y_{\i}$}] (b\i) at (-1,-\i) {};
\node[label={45:$z_{\i}$}] (c\i) at (1,-\i) {};
\draw (a\i)--(b\i);
}
\foreach \i/\j in {1/2,2/3,3/4}{
\draw (a\i)--(b\j);
\draw (b\i)--(a\j);
\draw (a\i)--(c\j);
\draw (c\i)--(a\j);
}
\node[rectangle,draw=none] at (0,-4.5) {$\BL(G)$};
\end{tikzpicture}
\end{center}
\caption{An illustration of $G$ and $\BL(G)$}
\label{BLexample}
\end{figure}

\begin{ex}
Let $G$ be the path on $4$ vertices.  Then the graph $\BL(G)$ is as shown in Figure \ref{BLexample}.  The graph $G$ has $\gdL(G)=4$ and $\ZL(G)=0$, as $(4,3,1,2)$ is a maximum $L$-sequence in $G$.  

At the same time, $(x_4,x_3,x_1,x_2)$ is a total dominating sequence in $\BL(G)$, so $\gdt(G,X)=4$ for $X=\{x_1,x_2,x_3,x_4\}$.
\end{ex}

When $G$ is a bipartite graph, its total dominating sequence has a nice decomposition, as shown in Proposition~8.3 and Theorem~8.4 of \cite{TDinGraph}.

\begin{prop}
\label{gdtbipartite}
Let $G$ be a bipartite graph with two parts $X$ and $Y$.  Then $\gdt(G)=2\gdt(G,X)=2\gdt(G,Y)$.
\end{prop}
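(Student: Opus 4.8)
The plan is to establish the chain of equalities by a symmetric decomposition argument that exploits how a total dominating sequence in a bipartite graph must alternate (or can be rearranged to separate) between the two parts. First I would recall that for a total dominating sequence $(v_1,\ldots,v_k)$ the footprint condition is $N(v_i)\setminus\bigcup_{j<i}N(v_j)\neq\emptyset$; since $G$ is bipartite with parts $X$ and $Y$, every vertex of $X$ has its entire open neighborhood inside $Y$ and vice versa. The key observation is that the ``new'' vertex witnessing the footprint condition for a vertex $v_i\in X$ always lies in $Y$, and the witness for a vertex in $Y$ always lies in $X$. This means that the contributions of the $X$-vertices and the $Y$-vertices of the sequence are, in a precise sense, independent: the vertices of the sequence lying in $X$ dominate (cover) vertices of $Y$, and those lying in $Y$ cover vertices of $X$.

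The main technical step I would carry out is to show that from any total dominating sequence $S$ of $G$ one can extract a total dominating sequence using only vertices of $X$ whose length is at least half of $|S|$, and symmetrically for $Y$; this yields $\gdt(G)\le 2\gdt(G,X)$ and $\gdt(G)\le 2\gdt(G,Y)$. Concretely, I would consider the subsequence $S_X$ of $S$ consisting of the terms lying in $X$, preserving their relative order, and argue that $S_X$ remains a total dominating sequence \emph{using only vertices in} $X$. The footprint condition for a vertex $v_i\in X$ involves only the open neighborhoods of the earlier $X$-vertices (because $N(v_i)\subseteq Y$ and $N(v_j)\cap Y=\emptyset$ whenever $v_j\in Y$), so deleting the $Y$-terms does not destroy the footprint witnesses of the $X$-terms. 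Hence $S_X$ is a valid total dominating sequence in $X$, giving $|S_X|\le\gdt(G,X)$; likewise $|S_Y|\le\gdt(G,Y)$. Since $|S|=|S_X|+|S_Y|$, taking $S$ to be a maximum total dominating sequence gives $\gdt(G)=|S_X|+|S_Y|\le \gdt(G,X)+\gdt(G,Y)$.

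For the reverse inequalities I would start from a maximum total dominating sequence $T$ using only vertices of $X$ and construct a total dominating sequence of $G$ of length $2|T|$. The idea is to interleave or concatenate $T$ with a copy of a sequence in $Y$ that mirrors the footprint structure; more cleanly, one shows that any total dominating sequence confined to $X$ can be extended by appending a corresponding sequence of $Y$-vertices of equal length, each of which still finds a fresh witness in $X$. This exploits the symmetry between the two parts and gives $\gdt(G)\ge 2\gdt(G,X)$ and $\gdt(G)\ge 2\gdt(G,Y)$. Combined with the upper bound of the previous paragraph, and with the symmetric roles of $X$ and $Y$, the three quantities $\gdt(G)$, $2\gdt(G,X)$, and $2\gdt(G,Y)$ are forced to coincide.

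The step I expect to be the main obstacle is the reverse direction, namely producing the paired $Y$-sequence (or showing that an $X$-confined sequence can always be doubled). The difficulty is that appending $Y$-vertices after all $X$-vertices have been played requires verifying that each appended $Y$-vertex still has an uncovered neighbor in $X$; this needs a careful pairing so that the witness in $X$ for the $j$-th appended $Y$-vertex has not already been consumed. I anticipate this is exactly where the bipartite structure and a matching-style or mirroring argument (tracking witnesses as a bijection between the two parts) becomes essential, and where one must invoke the decomposition already established in Proposition~8.3 and Theorem~8.4 of \cite{TDinGraph} rather than reprove it from scratch. The footprint-preservation argument for the \emph{forward} inequality, by contrast, is routine once the neighborhood containment $N(v_i)\subseteq Y$ for $v_i\in X$ is spelled out.
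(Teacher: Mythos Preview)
The paper does not actually prove this proposition: it translates the bipartite graph into the incidence graph of a hypergraph $\mathcal{H}$, identifies $\gdt(G,X)$ and $\gdt(G,Y)$ with the hypergraph parameters $\tau_{\rm gd}(\mathcal{H})$ and $\rho_{\rm gd}(\mathcal{H})$, and then cites Proposition~8.3 and Theorem~8.4 of \cite{TDinGraph} verbatim. Your proposal is therefore a genuinely different, self-contained combinatorial route, and your forward half ($\gdt(G)\leq \gdt(G,X)+\gdt(G,Y)$ via the subsequences $S_X,S_Y$) is correct for exactly the reason you give.

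Where you are too pessimistic is the reverse half. The ``mirroring'' you allude to works directly and you do not need to retreat to the citation. If $(x_1,\ldots,x_k)$ is a total dominating sequence in $X$ with witnesses $y_i\in N(x_i)\setminus\bigcup_{j<i}N(x_j)\subseteq Y$, then the \emph{reversed} sequence $(y_k,y_{k-1},\ldots,y_1)$ is a total dominating sequence in $Y$: indeed $x_i\in N(y_i)$, and $x_i\notin N(y_j)$ for any $j>i$ since $y_j\notin N(x_i)$ by the choice of $y_j$. This already gives $\gdt(G,Y)\geq\gdt(G,X)$, hence equality by symmetry. Moreover the concatenation $(x_1,\ldots,x_k,y_k,\ldots,y_1)$ is a total dominating sequence of $G$ of length $2k$, because the $x$-terms have neighborhoods only in $Y$ and the $y$-terms only in $X$, so the two halves do not interfere; thus $\gdt(G)\geq 2\gdt(G,X)$. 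Combined with your upper bound this closes the argument. The only detail you left implicit is the \emph{reversal} of the witness order, which is what makes each appended $y_i$ find the fresh neighbor $x_i$; once you spot that, the ``matching-style'' verification you were worried about is a two-line check.
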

\begin{proof}
Any bipartite graph $G$ can be the incidence graph of a hypergraph $\mathcal{H}$; that is, $X$ represents the vertices of $\mathcal{H}$, $Y$ represents the edges of $\mathcal{H}$, and there is an edge between $x\in X$ and $y\in Y$ if and only if vertex $x$ is incident to edge $y$ in $\mathcal{H}$.  Following the notation in \cite{TDinGraph}, we have $\gdt(G,X)=\tau_{\rm gd}(\mathcal{H})$ and $\gdt(G,Y)=\rho_{\rm gd}(\mathcal{H})$, so the desired results follow from Proposition~8.3 and Theorem~8.4 of \cite{TDinGraph}.
\end{proof}

\begin{thm}
\label{ZLidentity}
Let $G$ be a graph on $n$ vertices.  Then $\Zminus(\BL(G))=n+2\ZL(G)$ and $\gdt(\BL(G))=2\gdL(G)$.
\end{thm}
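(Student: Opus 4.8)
The plan is to prove the second identity $\gdt(\BL(G)) = 2\gdL(G)$ first, and then deduce $\Zminus(\BL(G)) = n + 2\ZL(G)$ from it by a routine vertex count together with Theorem~\ref{mainthm}. Indeed, since $|V(\BL(G))| = 3n$, Theorem~\ref{mainthm} gives $\Zminus(\BL(G)) = 3n - \gdt(\BL(G))$ and $\ZL(G) = n - \gdL(G)$; substituting $\gdt(\BL(G)) = 2\gdL(G)$ rewrites $3n - 2\gdL(G)$ as $n + 2(n - \gdL(G)) = n + 2\ZL(G)$. So the entire theorem rests on the combinatorial identity.

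For that identity, I would first record that $\BL(G)$ is bipartite with parts $X = \{x_1, \ldots, x_n\}$ and $Y \cup Z = \{y_1,\ldots,y_n,z_1,\ldots,z_n\}$, so Proposition~\ref{gdtbipartite} reduces the claim to showing $\gdt(\BL(G), X) = \gdL(G)$. The engine of the proof is the neighborhood description $N_{\BL(G)}(x_i) = \{y_j : j \in N_G[i]\} \cup \{z_j : j \in N_G(i)\}$, in which the extra edges $\{x_i, y_i\}$ are precisely what upgrade the open neighborhood recorded by the $z$-copies to a closed neighborhood recorded by the $y$-copies.

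I would then show that the index map $(v_1, \ldots, v_k) \mapsto (x_{v_1}, \ldots, x_{v_k})$ restricts to a length-preserving bijection between $L$-sequences of $G$ and total dominating sequences of $\BL(G)$ using only $X$, which yields both inequalities and hence $\gdt(\BL(G),X)=\gdL(G)$. For the forward direction, given an $L$-witness $w \in N_G[v_t] \setminus \bigcup_{s<t} N_G(v_s)$, I split into the case $w = v_t$, where $y_{v_t}$ is a newly dominated vertex of $x_{v_t}$, and the case $w \in N_G(v_t)$, where $z_w$ is newly dominated. For the reverse direction, a newly dominated neighbor of $x_{i_t}$ is either some $y_j$, giving $j \in N_G[i_t]$ with $j \notin N_G[i_s]$ (hence $j \notin N_G(i_s)$) for all $s < t$, or some $z_j$, giving $j \in N_G(i_t) \subseteq N_G[i_t]$ with $j \notin N_G(i_s)$ for all $s < t$; in either case $j \in N_G[i_t]\setminus\bigcup_{s<t}N_G(i_s)$, so $j$ witnesses the $L$-condition for $i_t$.

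The main obstacle is the bookkeeping in the forward direction, specifically seeing why both families $Y$ and $Z$ are genuinely needed. When the $L$-witness is $w = v_t$ one cannot use $z_{v_t}$, since that would require the impossible $v_t \in N_G(v_t)$; and when $w \in N_G(v_t)$ one cannot safely use $y_w$, since $w$ might coincide with an earlier $v_s$, in which case $y_w$ is already dominated. Choosing $y_{v_t}$ in the first case and $z_w$ in the second is exactly what makes the correspondence valid, and verifying that each chosen vertex is truly new, that is, not dominated by any $x_{v_s}$ with $s < t$, is the one place where the precise edge set of $\BL(G)$ must be invoked.
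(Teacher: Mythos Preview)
Your proposal is correct and follows essentially the same approach as the paper: reduce via Proposition~\ref{gdtbipartite} to showing $\gdt(\BL(G),X)=\gdL(G)$, then verify that the index map $(v_1,\ldots,v_k)\mapsto(x_{v_1},\ldots,x_{v_k})$ is a length-preserving bijection between $L$-sequences of $G$ and total dominating sequences in $X$, using the same case split on whether the witness equals $v_t$ (use $y_{v_t}$) or lies in $N_G(v_t)$ (use $z_w$). Your derivation of the $\Zminus$ identity from the $\gdt$ identity via Theorem~\ref{mainthm} and $|V(\BL(G))|=3n$ is exactly what the paper leaves implicit.
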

\begin{proof}
Let $X=\{x_1,\ldots,x_n\}$.  We will show that $\gdt(\BL(G),X)=\gdL(G)$.  Then the desired results follow form Proposition~\ref{gdtbipartite} and Theorem~\ref{mainthm}.

Let $k=\gdL(G)$ and $(v_1,\ldots, v_k)$ an $L$-sequence of $G$.  We can verify that $(x_{v_1},\ldots, x_{v_k})$ is a total dominating sequence of $\BL(G)$.  Since $(v_1,\ldots,v_k)$ is an $L$-sequence in $G$, we may pick  
\[u_i\in N_G[v_i]\setminus\bigcup_{j=1}^{i-1}N_G(v_j)\]
for each $i$ with $1\leq i\leq k$.  If $u_i\neq v_i$, then 
\[z_{u_i}\in N_{\BL(G)}(x_{v_i})\setminus\bigcup_{i=1}^{i-1}N_{\BL(G)}(x_{v_j}).\]
If $u_i=v_i$, then 
\[y_{u_i}\in N_{\BL(G)}(x_{v_i})\setminus\bigcup_{i=1}^{i-1}N_{\BL(G)}(x_{v_j}).\]
Therefore, $(x_{v_1},\ldots,x_{v_k})$ is a total dominating sequence of $\BL(G)$, and 
\[\gdt(\BL(G),X)\geq k=\gdL(G).\]

Conversely, suppose $h=\gdt(\BL(G),X)$ and $(x_{v_1},\ldots,x_{v_h})$ is a total dominating sequence of $\BL(G)$.  We can verify that $(v_1,\ldots,v_h)$ is an $L$-sequence of $G$.  Since $(x_{v_1},\ldots,x_{v_h})$ is a total dominating sequence of $\BL(G)$, we may pick an element in 
\[N_{\BL(G)}(x_{v_i})\setminus\bigcup_{i=1}^{i-1}N_{\BL(G)}(x_{v_j})\]
and let $u_i$ be its index.  Thus 
\[u_i\in N_G[v_i]\setminus\bigcup_{j=1}^{i-1}N_G(v_j).\]
Therefore, $(v_1,\ldots,v_h)$ is an $L$-sequence of $G$, and $\gdL(G)\geq h=\gdt(\BL(G),X)$.  This completes the proof.
\end{proof}

\begin{cor}
\label{asymbound}
Let $G$ be a graph on $n$ vertices.  Then 
\[\frac{\Mzero(\BL(G))-n}{2}\leq \ZL(G)\text{ and }\gdL(G)\leq \frac{1}{2}\mrzero(\BL(G)).\]
\end{cor}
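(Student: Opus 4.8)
The plan is to derive both inequalities by feeding the two identities of Theorem~\ref{ZLidentity} into the standard comparisons between maximum nullity, zero forcing, Grundy total domination, and minimum rank, all applied to the auxiliary graph $\BL(G)$. Nothing new needs to be constructed; the real work was already carried out in establishing Theorem~\ref{ZLidentity}, so I expect the proof to be a short chain of substitutions with no genuine obstacle.

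For the first inequality, I would apply the known bound $\Mzero(H)\leq\Zminus(H)$ (recorded earlier from \cite{cancun}) with $H=\BL(G)$. Theorem~\ref{ZLidentity} gives $\Zminus(\BL(G))=n+2\ZL(G)$, so
\[\Mzero(\BL(G))\leq\Zminus(\BL(G))=n+2\ZL(G),\]
and rearranging yields $\tfrac{1}{2}\bigl(\Mzero(\BL(G))-n\bigr)\leq\ZL(G)$.

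For the second inequality, I would invoke Theorem~\ref{mrbounds}(3), namely $\gdt(H)\leq\mrzero(H)$, again with $H=\BL(G)$. Theorem~\ref{ZLidentity} gives $\gdt(\BL(G))=2\gdL(G)$, hence
\[2\gdL(G)=\gdt(\BL(G))\leq\mrzero(\BL(G)),\]
and dividing by $2$ gives $\gdL(G)\leq\tfrac{1}{2}\mrzero(\BL(G))$.

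Finally, I would observe that the two inequalities are in fact equivalent, which doubles as a sanity check on the constants. Since $\BL(G)$ has $3n$ vertices we have $\Mzero(\BL(G))+\mrzero(\BL(G))=3n$, and combining this with $\ZL(G)=n-\gdL(G)$ from Theorem~\ref{mainthm} transforms either inequality into the other. The only point that requires care is precisely this vertex count: one must remember that $\BL(G)$ has $3n$ vertices rather than $n$, so that the additive constant $n$ in $\Zminus(\BL(G))=n+2\ZL(G)$ (and the factor $2$ throughout) is tracked correctly. Given Theorem~\ref{ZLidentity}, there is no substantive difficulty beyond this bookkeeping.
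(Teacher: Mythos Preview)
Your proposal is correct and essentially matches the paper's argument: both proofs combine Theorem~\ref{ZLidentity} with the bound $\gdt\leq\mrzero$ (equivalently $\Mzero\leq\Zminus$) applied to $\BL(G)$. The only cosmetic difference is that the paper proves the second inequality first and then derives the first from it via the duality $\Mzero(\BL(G))+\mrzero(\BL(G))=3n$ and $\ZL(G)=n-\gdL(G)$, whereas you prove each inequality directly and note the duality as a sanity check.
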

\begin{proof}
By Theorem~\ref{ZLidentity} and Theorem~\ref{mrbounds}, 
\[\gdL(G)=\frac{1}{2}\gdt(\BL(G))\leq\frac{1}{2}\mrzero(\BL(G)).\]  
For the other inequality follows, 
\[\begin{aligned} \ZL(G) & =n-\gdL(G)\geq n-\frac{1}{2}\mrzero(\BL(G)) \\
 & = n-\frac{1}{2}(3n-\Mzero(\BL(G)))=\frac{\Mzero(\BL(G))-n}{2}
\end{aligned}\]
by Theorem~\ref{mainthm}.
\end{proof}

For a graph $G$ on $n$ vertices, define $\mrL(G)$ as the minimum rank over matrices of the form $\begin{bmatrix} A & B \end{bmatrix}$ with $A\in\Selld(G)$ and $B\in\Szero(G)$, and let $\ML(G)=n-\mrL(G)$.

\begin{cor}
For any graph $G$, $\ML(G)\leq \ZL(G)$ and $\gdL(G)\leq \mrL(G)$.
\end{cor}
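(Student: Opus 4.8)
The plan is to prove the two inequalities together, since they are equivalent. By the definition $\ML(G)=n-\mrL(G)$ and by Theorem~\ref{mainthm} we have $\ZL(G)=n-\gdL(G)$, so
\[
\ML(G)\leq \ZL(G)\iff n-\mrL(G)\leq n-\gdL(G)\iff \gdL(G)\leq\mrL(G).
\]
Hence it suffices to establish $\gdL(G)\leq\mrL(G)$, and I would route this through the bound on $\mrzero(\BL(G))$ already obtained in Corollary~\ref{asymbound}.

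The first step is to invoke Corollary~\ref{asymbound}, giving $\gdL(G)\leq\frac{1}{2}\mrzero(\BL(G))$. The second step is to rewrite $\mrzero(\BL(G))$ in terms of biadjacency matrices. Since $\BL(G)$ is bipartite with parts $X=\{x_1,\ldots,x_n\}$ and $\{y_1,\ldots,y_n\}\cup\{z_1,\ldots,z_n\}$, every matrix in $\Szero(\BL(G))$ has the block form $\begin{bmatrix} 0 & N \\ N^\top & 0 \end{bmatrix}$, where $N$ is an $n\times 2n$ matrix whose zero--nonzero pattern matches the $X$-to-$(Y\cup Z)$ biadjacency of $\BL(G)$; the two diagonal blocks vanish because there are no edges inside either part and the diagonal is forced to be zero. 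The rank of such a block matrix is $2\rank(N)$, so minimizing over $\Szero(\BL(G))$ yields $\mrzero(\BL(G))=2\min_N\rank(N)$, the minimum taken over all admissible biadjacency matrices $N$.

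The key step is to recognize that a matrix $\begin{bmatrix} A & B \end{bmatrix}$ with $A\in\Selld(G)$ and $B\in\Szero(G)$ is exactly an admissible biadjacency matrix for $\BL(G)$. By the definition of $\BL(G)$, $x_i$ is adjacent to $y_j$ precisely when $i=j$ or $\{i,j\}\in E(G)$, which is the pattern defining $\Selld(G)$, while $x_i$ is adjacent to $z_j$ precisely when $\{i,j\}\in E(G)$, which is the pattern defining $\Szero(G)$. Thus the $X$-to-$Y$ block of $N$ may be taken to be $A$ and the $X$-to-$Z$ block to be $B$. Since the symmetric matrices $A,B$ form a subfamily of all matrices carrying these patterns, we obtain $\min_N\rank(N)\leq\mrL(G)$. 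Chaining the three steps,
\[
\gdL(G)\leq\tfrac{1}{2}\mrzero(\BL(G))=\min_N\rank(N)\leq\mrL(G),
\]
which gives $\gdL(G)\leq\mrL(G)$ and hence, by the equivalence above, $\ML(G)\leq\ZL(G)$.

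I expect the only real subtlety to be the bookkeeping in the second and third steps: carefully verifying that the $X$-to-$Y$ and $X$-to-$Z$ blocks of $\BL(G)$ carry exactly the patterns $\Selld(G)$ and $\Szero(G)$ (so that $\begin{bmatrix} A & B \end{bmatrix}$ is genuinely admissible), and that relaxing from symmetric matrices to arbitrary matrices with the same pattern only decreases the minimum rank. Both are routine once the identification is set up, and the factor $\frac{1}{2}$ coming from Corollary~\ref{asymbound} is precisely cancelled by the factor $2$ arising from the rank of the bipartite block matrix.
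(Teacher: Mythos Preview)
Your proof is correct and follows essentially the same route as the paper: invoke Corollary~\ref{asymbound} to get $\gdL(G)\leq\tfrac12\mrzero(\BL(G))$, then bound $\mrzero(\BL(G))$ by $2\mrL(G)$ via an explicit matrix in $\Szero(\BL(G))$, and deduce the companion inequality from Theorem~\ref{mainthm}. The only difference is cosmetic: the paper simply writes down the block matrix
\[
\begin{bmatrix} O & A & B \\ A & O & O \\ B & O & O \end{bmatrix}\in\Szero(\BL(G))
\]
with rank $2\,\rank\!\begin{bmatrix}A & B\end{bmatrix}=2\mrL(G)$ and is done, whereas you first establish the stronger equality $\mrzero(\BL(G))=2\min_N\rank(N)$ over all admissible biadjacency matrices $N$ and then specialize to symmetric blocks; this extra step is valid but not needed for the inequality.
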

\begin{proof}
Let $A\in\Selld(G)$ and $B\in\Szero(G)$ be matrices such that $\rank(\begin{bmatrix} A & B \end{bmatrix})=\mrL(G)$.  Then the matrix 
\[\begin{bmatrix}
O & A & B \\
A & O & O \\
B & O & O \\
\end{bmatrix}\]
is a matrix in $\Szero(\BL(G))$ with rank $2\mrL(G)$.  Therefore, by Corollary~\ref{asymbound}
\[\gdL(G)\leq \frac{1}{2}\mrzero(\BL(G))\leq \mrL(G).\]
The other inequality follows from Theorem~\ref{mainthm}. 
\end{proof}

\begin{figure}[h]
\begin{center}
\begin{tikzpicture}
\foreach \i in {1,2,3}{
\pgfmathsetmacro{\j}{int(\i+3)}
\node [label={left:$\i$}] (a\i) at (0,\i) {};
\node [label={right:$\j$}] (b\i) at (1.5,\i) {};
}
\foreach \i in {1,2,3}{
\foreach \j in {1,2,3}{
\draw (a\i) -- (b\j);
}
}
\end{tikzpicture}
\end{center}
\caption{A labeled $K_{3,3}$}
\label{K33}
\end{figure}
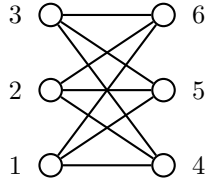

\begin{ex}
Let $G$ be the complete bipartite graph $K_{3,3}$ as shown in Figure~\ref{K33}.  Then $(1,2,3,4)$ is an $L$-sequence.  The rectangular matrix 
\[C=\begin{bmatrix}
3 & 0 & 0 & 1 & 1 & -2 & 0 & 0 & 0 &
1 & 1 & 1 \\
0 & 3 & 0 & 1 & -2 & 1 & 0 & 0 & 0 &
1 & 1 & 1 \\
0 & 0 & 3 & -2 & 1 & 1 & 0 & 0 & 0 &
1 & 1 & 1 \\
1 & 1 & -2 & 3 & 0 & 0 & 1 & 1 & 1 &
0 & 0 & 0 \\
1 & -2 & 1 & 0 & 3 & 0 & 1 & 1 & 1 &
0 & 0 & 0 \\
-2 & 1 & 1 & 0 & 0 & 3 & 1 & 1 & 1 &
0 & 0 & 0 \\
\end{bmatrix}\]
is of the form $\begin{bmatrix} A & B \end{bmatrix}$ with $A\in\Selld(G)$ and $B\in\Szero(G)$.  Since $\rank(C)=4$, the sequence $(1,2,4,5)$ is a maximum $L$-sequence.
\end{ex}

\section{Conclusion}  

In this paper we found the relations between the four zero forcing type parameters $Z$, $\Zelld$, $\Zminus$, $\ZL$ and the four Grundy domination type parameters $\gdZ$, $\gd$, $\gdt$, $\gdL$.  With these relation we are allowed to bring results from one sides to the other; for example, Theorem~\ref{mrbounds} provides upper bounds of the Grundy domination type parameters due to the application of the zero forcing type parameters to the minimum rank problem.

There are yet more zero forcing type parameters; see, e.g., \cite{param, Zsap, Zoc}.  In \cite{smallparam}, the positive semi-definite zero forcing number $Z_+(G)$ was shown to have the relation $Z_+(G)+\OS(G)=|V(G)|$.  Here $\OS(G)$ is the \emph{OS-number} introduced in  \cite{HHLMNP09} with its motivation from linear algebra and defined as the largest length of a sequence $(v_1,\ldots, v_k)$ such that 
\[N(v_i)\setminus\bigcup_{\substack{x\in V(H_i)\\x\neq v_i}}N[x],\]
where $H_i$ is the connected component of the induced subgraph $G[\{v_1,\ldots, v_i\}]$ that contains $v_i$.  With this definition, one may view the OS-number as an Grundy domination type parameters, and it would be interesting to see if there are further application of the OS-number to the domination problem.

On the other side, Bre\v{s}ar, Klav\v{z}ar, and Rall \cite{BKR10} introduced the domination game as follows.  Two players, Dominator and Staller, are picking vertices alternatively; each one needs to pick a vertex that dominate at least a new vertex that is not dominated by any previous chosen vertices.  Dominator wants to dominate all vertices as fast as possible, while Staller is trying to slow down the process.  The \emph{game domination number} $\gag(G)$ is the number of required steps to finish the game if starting with Dominator.  The game domination number and its variants should also have their counterparts as zero forcing type parameters, and it would be nice to see their applications to the minimum rank problem.



\newcommand{\noopsort}[1]{}

\end{document}